\documentclass[12pt, oneside]{article}   	
\usepackage{geometry}                		
\geometry{letterpaper}                   		
\usepackage[parfill]{parskip}    		
\usepackage{graphicx}				
\usepackage{amssymb}
\usepackage{amsfonts}
\usepackage{amsthm}
\usepackage{amsmath}
\usepackage{fontenc}
\usepackage[all]{xy}
\usepackage{latexsym}
\usepackage{layout}
\usepackage{epsfig}
\usepackage{graphicx}
\def\fn{\lfloor\frac{n}{2}\rfloor}
\def\fnt{\lfloor\frac{n-2}{2}\rfloor}

\def\mgt{m_{T,\gamma}}

\usepackage{tikz}
\usepackage{relsize}
\usepackage{mathabx}

\newtheorem{thm}{Theorem}[section]

\newtheorem{lem}[thm]{Lemma}

\newtheorem{prp}[thm]{Proposition}

\title{Maximum Number of Minimum Dominating and Minimum Total Dominating Sets}
\author{Anant Godbole\\ East Tennessee State University\and Jessie D. Jamieson, William Jamieson\\ 
University of Nebraska, Lincoln}
\date{}							

\begin{document}
\maketitle
\abstract{Given a connected graph with domination (or total domination) number $\gamma\ge2$, we ask for the maximum number $m_\gamma$ and $\mgt$ of dominating and total dominating sets of size $\gamma$.  An exact answer is provided for $\gamma=2$ and lower bounds are given for $m_\gamma,\mgt;\gamma\ge3$.}


\section{Introduction}  We study the maximum number $m_\gamma$ and $\mgt$ of dominating or total dominating sets of mimimum size $\gamma$ in a graph $G$ with $n$ vertices.  The problem is solved for $\gamma=2$ and lower bounds are provided for $\gamma\ge3$.  Among the questions we hope to understand is the mixed  interplay of our problem with the number of edges, an increase in which aids in the creation of dominating sets of a given size, but which decreases the domination number.

\section{$\gamma=2$}  Let $G=(V,E)$ be a graph with $\vert V\vert=n\ge 3$.  A {\it dominating set} is a collection of vertices $U\subseteq V$ such that each $x\in V\setminus U$ is adjacent to some $y\in U$.  A {\it total dominating set} is a collection $U\subseteq V$ of vertices such that each $x\in V$ is adjacent to some $y\in U$.  The {\it (total) domination number} is the cardinality of the (total) dominating sets of smallest cardinality, the so-called minimum (total) dominating sets.  We claim that the maximum number of minimum dominating sets, and maximum number of minimum total dominating sets of size 2 are‎ approximately ${n\choose 2}$ and $\frac{n(n-2)}{2}$ respectively (exact numbers below).  To exhibit a lower bound on $m_{T,2}$, we first assume that $n$ is even and consider the complete multipartite graph $K_{2,2,\ldots,2}$ with $n/2$ parts of size 2 each; note that this is the same as the complete graph $K_{n}$ minus a one factor.   A dominating set cannot be of size 1, and a total dominating set of size 2 can be chosen in 4 ways for any choice of 2 parts in the bipartition.  Thus
\[m_{T,2}\ge 4{{n/2}\choose 2}=4\cdot\frac{n}{2}\frac{(n-2)}{2}\cdot\frac{1}{2}=\frac{n(n-2)}{2}.\]  This is also equal to the number of edges.  If $n$ is odd, we start with $\lfloor\frac{n}{2}\rfloor$ parts of sizes $3,2,2,\ldots,2$ in the complete bipartite graph, {\it and add a single edge in the part of size 3, though this has no effect on the number of total dominating sets}, so the number of total dominating sets is
\[4{{\fnt}\choose 2}+6(\fn-1)=\frac{n(n-2)-3}{2},\]
one less than the number of edges.  Notice that in both the even and odd constructions, the number of edges in the graph is equal to the maximum possible for a graph with (regular) domination number 2; see \cite{vizing}, \cite{goddard}.  
\begin{thm} The maximum number $m_{T,2}(n)$ of total dominating sets of size 2 in a graph $G=(V,E)$ with $\vert V\vert=n$ and total domination number 2 is given by
$$m_{T,2}(n)=\begin{cases} \frac{n(n-2)}{2}, & \mbox{if\ $n$ is even } \\ \frac{n(n-2)-3}{2} & \mbox{if\ $n$\ is\ odd } \end{cases}$$
\end{thm}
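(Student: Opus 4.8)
The lower bounds are already exhibited by the two multipartite constructions preceding the statement, so the plan is to establish the matching upper bounds: $m_{T,2}(n)\le \frac{n(n-2)}{2}$ for $n$ even and $m_{T,2}(n)\le \frac{n(n-2)-3}{2}$ for $n$ odd. The first step is an edge correspondence. In any total dominating set $\{u,v\}$ of size $2$, the vertex $u$ must have a neighbor inside the set, and $v$ is the only candidate; hence $uv\in E$. Thus $\{u,v\}$ is a total dominating set exactly when $uv\in E$ and $N(u)\cup N(v)=V$; call such an edge \emph{good}. Then $m_{T,2}(n)$ is the number of good edges, and in particular $m_{T,2}(n)\le |E|$. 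Moreover a size-$2$ total dominating set is a size-$2$ dominating set, so $G$ has domination number at most $2$; under the paper's standing hypothesis $\gamma\ge 2$ (which excludes graphs with a universal vertex, such as $K_n$, that would otherwise violate the formula) the domination number equals $2$, and I may invoke the edge bound $|E|\le \lfloor n(n-2)/2\rfloor$ for domination number $2$ recalled above.

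For $n$ even this already suffices: $m_{T,2}(n)\le |E|\le n(n-2)/2$, and $K_{2,\dots,2}$ attains it, so the maximum is exactly $n(n-2)/2$. The subtle case is $n$ odd, where the edge bound only yields $m_{T,2}(n)\le (n(n-2)-1)/2$, one more than claimed. I must therefore rule out this top value. Since $m_{T,2}(n)\le |E|\le (n(n-2)-1)/2$, attaining $m_{T,2}(n)=(n(n-2)-1)/2$ would force both $|E|=(n(n-2)-1)/2$ and $m_{T,2}(n)=|E|$; that is, \emph{every} edge of $G$ would be good.

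The key structural lemma I would isolate is that if every edge of $G$ is good, then $G$ is complete multipartite. Declare two distinct vertices equivalent when they are non-adjacent; this is reflexive and symmetric, and it is transitive because if $xy\notin E$ and $yz\notin E$ but $xz\in E$, then the good edge $xz$ forces $y\in N(x)\cup N(z)$, i.e. $xy\in E$ or $yz\in E$, a contradiction. The classes are then independent sets joined completely to one another, which is precisely a complete multipartite graph. A complete multipartite graph with at least two parts has domination number $2$ iff every part has size at least $2$; maximizing the number of edges $\binom{n}{2}-\sum_i\binom{a_i}{2}$ subject to $a_i\ge 2$ and $\sum_i a_i=n$ odd forces some part of odd size, optimized by sizes $3,2,\dots,2$, which gives $|E|=\binom{n}{2}-\frac{n+3}{2}=\frac{n(n-2)-3}{2}$. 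This is strictly less than $(n(n-2)-1)/2$, contradicting $|E|=(n(n-2)-1)/2$. Hence the top value is unattainable and $m_{T,2}(n)\le \frac{n(n-2)-3}{2}$, which the odd construction meets.

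The main obstacle is exactly this odd case: gaining the extra ``$-1$'' that separates the edge bound from the true maximum. The idea that overcomes it is that \emph{saturating} the count, forcing every edge to be a total dominating set, is rigid enough to impose complete multipartiteness, after which a parity constraint on the part sizes costs precisely one edge. A secondary point to handle explicitly is the reduction to domination number $2$, without which the complete graph is a genuine exception to the stated formula.
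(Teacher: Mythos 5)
Your proof is correct, and it shares the paper's overall scaffolding: the edge correspondence $\{u,v\}\mapsto uv\in E$, the bound $m_{T,2}(n)\le |E|\le \lfloor n(n-2)/2\rfloor$ for graphs with domination number $2$, and, in the odd case, the observation that attaining $(n(n-2)-1)/2$ would force every edge to be ``good.'' Where you genuinely diverge is the endgame of the odd case. The paper finishes by degree counting: since $2|E|=n(n-2)-1$ and every degree is at most $n-2$, all vertices have degree $n-2$ except a single vertex $w$ of degree $n-3$; the two non-neighbors of $w$ must then be adjacent to each other, and that (good) edge fails to dominate $w$. You instead prove a structural lemma: if every edge is good, then non-adjacency is transitive (a good edge $xz$ with $y$ non-adjacent to both endpoints would leave $y$ undominated), hence an equivalence relation, so $G$ is complete multipartite; domination number $2$ forces every part to have size at least $2$, and odd $n$ forces an odd part, whence $|E|\le\binom{n}{2}-\frac{n+3}{2}=\frac{n(n-2)-3}{2}$, contradicting $|E|=\frac{n(n-2)-1}{2}$. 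Your route is slightly heavier but buys more: it identifies the extremal graphs (complete multipartite with parts $3,2,\dots,2$) rather than merely refuting the top value, and it makes the parity cost of one edge transparent. You should, however, justify the optimization step --- that sizes $3,2,\dots,2$ minimize $\sum_i\binom{a_i}{2}$ over parts $a_i\ge2$ with odd sum; this is routine (use $\binom{a}{2}\ge a/2$ for every part and $\binom{a}{2}\ge a$ for the forced odd part $a\ge3$), but it is asserted rather than proved. Finally, note that your explicit reduction to domination number $2$ is actually more careful than the paper's: the paper's claim that domination number $1$ would ``trivially'' force total domination number $1$ is false (the total domination number of a simple graph is never $1$, and $K_n$ has total domination number $2$ with $\binom{n}{2}$ total dominating pairs), so the standing hypothesis $\gamma=2$ is precisely what rescues the statement, exactly as you use it.
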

\begin{proof}  In the even case, for each total dominating set $\{a,b\}$ of size 2, we associate the edge $e=\{a,b\}$, which exists since domination is total and $\gamma=2$.  Thus, if the number of total dominating sets of size 2 were to be larger than stated, there would be a larger number of edges in the graph than the maximum allowable for the graph to have domination number two.  Thus the domination number would be one, and trivially the total domination number would equal one too, contradicting our assumptions.  If $n$ is odd, and the number of total dominating sets is at least $\frac{n(n-2)-1}{2}$, then the same argument implies that the number of edges that correspond to total dominating sets is also at least, and thus equal to, the maximum value of $\frac{n(n-2)-1}{2}$ -- but this is not as yet a contradiction.  So we recognize that this number of ``induced" edges implies that all edges are between the two vertices in a total dominating set. Now the degree of any vertex is at most $n-2$, and the sum of the degrees is $n(n-2)-1$, implying that all vertices are of degree $n-2$ except for a single vertex of degree $n-3$.  The two vertices that are not adjacent to this vertex have an edge between them, since their degree is $n-2$.  However neither of them dominate the vertex with degree $n-3$.  This contradiction completes the proof.\hfill
\end{proof}

\medskip

\noindent REMARK:
Interestingly, for higher values of $\gamma$, the maximum number of edges in a graph with domination number $\gamma$ is obtained \cite{vizing} by constructing a dominating set with $\gamma-2$ isolated vertices.  This forces the number of dominating sets to be necessarily quadratic in $n$.  This will not help us in our quest, since we wish to prove that $m_\gamma=\mgt=\Omega(n^\gamma)$.  Viewed another way, we will not be able to {easily} exploit the link between the maximum size of the graph with given domination number and the maximum number of 
dominating sets when $\gamma\ge3$.  What if isolated vertices are not allowed?  In this case, we cite the important paper of  Sanchis \cite{sanchis}, and mention that here too we run into the problem of the number of edges being too large -- causing the domination number to be barely $\gamma$, and the number of dominating sets of size $\gamma$ to be smaller than what might occur otherwise.

\medskip

\noindent OPEN PROBLEM:  Understand more precisely why a ``medium number" of edges appears to lead to the maximum number of dominating sets of minimum size.

\medskip

\noindent The above two paragraphs notwithstanding, we now return to the case $\gamma=2$ where the link between maximum size and the maximum number of 2-dominating sets can be exploited to full effect.

\begin{prp}  $m_2(n)={n\choose 2}$ when $n$ is even and $m_2(n)={n\choose 2}-1$ when $n$ is odd.
\end{prp}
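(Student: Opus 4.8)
The plan is to establish matching upper and lower bounds, with essentially all the content concentrated in the odd case. The upper bound $m_2(n)\le\binom{n}{2}$ is free, since $\binom{n}{2}$ is just the total number of $2$-element vertex subsets. So the task reduces to (i) deciding when \emph{all} $\binom{n}{2}$ pairs can be dominating at once, and (ii) when this is impossible, showing that exactly one failure is forced and is achievable.

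First I would record a local description of failure: a pair $\{a,b\}$ is \emph{not} dominating precisely when some third vertex $c$ is adjacent to neither $a$ nor $b$. Consequently, every pair is dominating if and only if no vertex has two non-neighbors, i.e. every vertex satisfies $\deg(c)\ge n-2$. At the same time the hypothesis $\gamma=2$ gives $\gamma\ge2$, which is equivalent to $\Delta(G)\le n-2$ (a vertex of degree $n-1$ would dominate on its own). Combining these, \emph{if} all $\binom{n}{2}$ pairs are dominating then $G$ must be $(n-2)$-regular, whence $\lvert E\rvert=\tfrac{n(n-2)}{2}$.

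The two parities now separate. For even $n$, $\tfrac{n(n-2)}{2}$ is an integer and the required regular graph is realized by $K_n$ minus a perfect matching, i.e. the cocktail-party graph $K_{2,\dots,2}$ already used above: each vertex has a unique non-neighbor, so every pair dominates and $\gamma=2$, giving $m_2(n)=\binom{n}{2}$. For odd $n$, the quantity $n(n-2)$ is a product of two odd numbers and hence odd, so no $(n-2)$-regular graph on $n$ vertices exists; therefore at least one pair must fail and $m_2(n)\le\binom{n}{2}-1$.

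It remains to build, for odd $n$, a graph with $\gamma=2$ having \emph{exactly} one non-dominating pair, and here I would reuse the construction from the odd case above: the complete multipartite graph $K_{3,2,\dots,2}$ with a single edge added inside the part $\{x,y,z\}$ of size $3$. In this graph every vertex lying in a part of size $2$ has one non-neighbor, while after adding the edge $xy$ the vertices $x$ and $y$ each retain the single non-neighbor $z$, and only $z$ still has two non-neighbors, namely $x$ and $y$. Since a pair $\{a,b\}$ fails exactly when some vertex is a non-neighbor of both $a$ and $b$, and $z$ is the only vertex possessing two non-neighbors, the unique failing pair is $\{x,y\}$; all other $\binom{n}{2}-1$ pairs dominate, and $\gamma=2$ because every vertex still has degree at most $n-2$. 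The main obstacle is precisely this odd case: one must pin down the parity obstruction that rules out $\binom{n}{2}$ and then verify that the near-regular construction loses \emph{only} one pair rather than several, so that the lower and upper bounds meet.
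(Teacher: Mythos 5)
Your proposal is correct, and the crucial step --- ruling out $\binom{n}{2}$ for odd $n$ --- is argued by a genuinely different and arguably cleaner route than the paper's. The paper proceeds by edge counting against the known maximum size of a graph with domination number $2$: assuming all $\binom{n}{2}$ pairs dominate, it deduces that at least $\frac{n+1}{2}$ edges are missing (citing the Vizing/Goddard-type bound $\frac{n(n-2)-1}{2}$ on the number of edges) and then performs a case analysis on how the missing edges can be configured, showing each configuration forces a non-dominating pair or reproduces the extremal example with only $\binom{n}{2}-1$ dominating sets. You instead observe that a pair fails exactly when some third vertex misses both its members, so ``all pairs dominate'' is equivalent to $\delta(G)\ge n-2$, while $\gamma\ge 2$ forces $\Delta(G)\le n-2$; together these make $G$ $(n-2)$-regular, which the handshake lemma forbids when $n$ is odd since $n(n-2)$ is then odd. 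Your parity argument is self-contained (no appeal to the extremal size results of \cite{vizing}, \cite{goddard}), airtight where the paper's configuration analysis is somewhat sketchy as written, and your non-neighbor characterization also does double duty in the lower bound: it lets you verify explicitly that in $K_{3,2,\dots,2}$ plus the edge $xy$ only the vertex $z$ has two non-neighbors, so exactly the single pair $\{x,y\}$ fails --- a count the paper asserts without proof. What the paper's approach buys in exchange is thematic continuity: its edge-maximality argument parallels the proof of the preceding theorem on $m_{T,2}(n)$ and connects to the paper's broader discussion of the interplay between size and domination number, whereas your argument, while more elementary and more rigorous, does not exploit that link.
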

\begin{proof}  For $n$ even, the example is $K_{2,2,\ldots,2}$.  If $n$ is odd, we consider $K_{3,2,\ldots,2}$ and add a single edge in the first part to get a total of ${n\choose 2}-1$ dominating sets of size 2.  All that remains to prove, therefore, is that all 2-sets cannot dominate when $n$ is odd.  Assume to the contrary that $m_2(n)=\frac{n(n-1)}{2}$.  There are thus at least ${n\choose 2}-\frac{n(n-2)-1}{2}=\frac{n+1}{2}$ edges missing.  If these absent edges align themselves into a configuration exemplified by the extremal example above, i.e. $\frac{n-3}{2}$ pairs of vertex-disjoint  missing edges together  with three vertices that have a single edge among them, then we have ${n\choose 2}-1$ dominating sets as in the example.  If not, we must have one of two configurations:  (i) Either there are three independent vertices, in which case there are at least three sets that are not dominating; or (ii) There are two sets of $K_{1,2}$ complements, and thus (allowing for overlaps) at least one non-dominating set.  This completes the proof.\hfill
\end{proof}

\section{Lower Bound Constructions for $\gamma\ge3$}

The notation in this section is deliberately different.  Recall that $m_x=m_{n,x}$ is the maximum number of dominating sets of size $x$ in a graph with domination number $x$.  Our goal is to find lower bounds on $m_{n,x}$ through construction, and we will, accordingly, exhibit graphs $G_{n,x}$ with order $n$, $\gamma(G_{n,x})=x$, and with $\Omega(n^x)$ distinct dominating sets of size $x$.  Note that, as will be reported in a paper by a different author team \cite{bill}, for very large $n$ there {\it does} exist a graph $G_{n,x}$ with $n$ vertices and $\gamma(G_{n,x})=x$, and where the number of distinct dominating sets is ${n\choose x}(1-o(1))$ -- which is  close to the maximum possible. However, this graph can only be shown to exist using probabilistic methods.  Obviously improving on the construction in this paper is an immediate area for further work.  To construct $G_{n,x}$ we will create a series of components $H_1, H_2, \dots H_k$ such that (i) the sum of the orders of the $H_i$ is $n$; and (ii) $\sum_{i=1}^k\gamma(H_i)=x$.   Finally, we will set $G_{n,x}=\bigcup_{i=1}^k H_i$.  

We will give a way to construct $G_{n,x}$ for each $x$. For $x=1$ one can simply take $G_{n,x}$ to be the complete graph $K_n$. For $x=2$ one can take $G_{n,x}$ to be the graph given in Section 2.  For $x \ge 3$ we will need more than a single component. For components $H_i$ with $r$ vertices and $\gamma(H_i)=1$ we will use $K_r$, and for components with $\gamma(H_i)=2$ we will use the graphs from Section 2 of order $r$ which we will denote $\trpart[r]$.  The next result shows that using components with $\gamma(H_i)\ge3$  will not be necessary.

\begin{lem}\label{lem:12}
	We need only decompose the vertex set into $H_i$ with $\gamma(H_i)=1$ or $\gamma(H_i)=2$, $1\le i\le k$.
\end{lem}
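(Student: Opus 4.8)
The plan is to exploit the fact that the minimum dominating sets of a disjoint union factor through its components. First I would record the structural observation that makes the whole decomposition strategy work: if $G_{n,x}=\bigcup_{i=1}^k H_i$ is a disjoint union, then a set $S$ dominates $G_{n,x}$ exactly when $S\cap V(H_i)$ dominates $H_i$ for every $i$, since no edge joins distinct components. Because $\gamma(G_{n,x})=\sum_i\gamma(H_i)=x$ and each trace $S\cap V(H_i)$ must contain at least $\gamma(H_i)$ vertices, a set of size $x$ dominates $G_{n,x}$ if and only if its trace on each $H_i$ is a \emph{minimum} dominating set of $H_i$. Writing $d(H)$ for the number of minimum dominating sets of a graph $H$, this yields the multiplicative identity $d(G_{n,x})=\prod_{i=1}^k d(H_i)$, so that choosing the decomposition is exactly the problem of maximizing this product subject to $\sum_i|V(H_i)|=n$ and $\sum_i\gamma(H_i)=x$.

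With this in hand I would argue by an exchange (replacement) step. Suppose a decomposition contains a component $H$ with $g:=\gamma(H)\ge 3$ and $r:=|V(H)|$. I would replace $H$, on the same $r$ vertices, by $\lfloor g/2\rfloor$ copies of the Section~2 graph on balanced parts of sizes $r_i$, together with one complete graph on the remaining vertices when $g$ is odd, the $r_i$ summing to $r$. The new pieces have domination numbers $2,\dots,2$ (and a single $1$ in the odd case) summing to $g$, so both global totals $n$ and $x$ are preserved. Applying this step repeatedly removes every component with $\gamma\ge3$, leaving a decomposition in which each $\gamma(H_i)\in\{1,2\}$.

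To see that nothing is lost asymptotically, I would compare the two contributions to the product. On one side, every minimum dominating set of $H$ is a $g$-subset of its $r$ vertices, so $d(H)\le\binom{r}{g}=\Theta(r^{g})$. On the other side, the Section~2 counts give $d=\binom{r_i}{2}$ or $\binom{r_i}{2}-1$, i.e. $\Theta(r_i^{2})$, on each of the $\lfloor g/2\rfloor$ balanced parts of size $r_i=\Theta(r)$, so the replacement contributes $\prod_i\Theta(r_i^2)=\Theta(r^{g})$ as well. Thus the replacement matches, up to a constant factor, the best order any single $\gamma\ge3$ component could contribute. Since the goal is only to produce $\Omega(n^{x})$ minimum dominating sets, and since every $n$-vertex graph with $\gamma=x$ trivially has at most $\binom{n}{x}=O(n^{x})$ of them so that no decomposition can beat order $n^{x}$, a $\gamma\ge3$ component can improve the count by at most a constant factor over its $\{1,2\}$-replacement and is therefore never needed.

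The main obstacle I anticipate is the comparison in the previous paragraph, specifically confirming that the replacement really attains order $r^{g}$ and not a smaller power. This reduces to checking that the vertices split into $\lfloor g/2\rfloor$ domination-$2$ parts each of size $\Theta(r)$; since a connected graph with $\gamma=2$ needs at least four vertices, this requires $r\ge 4\lfloor g/2\rfloor$, which holds comfortably in the regime of interest, where $x$ is fixed and $n\to\infty$ forces the dominant component to have $r=\Theta(n)$. Components with bounded $r$ contribute only constant factors and so cannot affect the $\Omega(n^{x})$ order; the parity corrections $\binom{r_i}{2}$ versus $\binom{r_i}{2}-1$ and the extra $\Theta(r)$ factor in the odd case perturb only the leading constant, not the exponent. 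Finally, I would note that connectivity of $G_{n,x}$ plays no role in this counting, so the reduction is legitimate for the lower-bound constructions that follow.
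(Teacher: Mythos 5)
Your proposal is correct, but it takes a genuinely different route from the paper. The paper argues recursively within its own constructive toolbox: for $\gamma=3$ it optimizes the split into $K_{An}$ plus a Section~2 graph (getting $\frac{2}{27}n^3(1+o(1))$ at $A=1/3$), for $\gamma=4$ it compares the partitions $2+2$ and $1+1+2$ of the domination budget (finding $\sim n^4/64$ versus $\sim n^4/128$), and then closes with ``continuing in this fashion''; it never bounds what an arbitrary component with $\gamma\ge3$ could contribute. You instead make the product identity $d(G)=\prod_i d(H_i)$ explicit (a step the paper leaves implicit, and your derivation of it from $|S|=x=\sum_i\gamma(H_i)$ forcing each trace to be minimum is exactly right), bound any component $H$ with $\gamma(H)=g$ on $r$ vertices by the trivial $d(H)\le\binom{r}{g}=\Theta(r^g)$, and show that a balanced $\{1,2\}$-replacement on the same $r$ vertices recovers $\Theta(r^g)$, so the exchange loses at most a constant factor --- harmless for the paper's stated goal of $\Omega(n^x)$ dominating sets. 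Your version is both more rigorous (no ``continuing in this fashion'') and strictly more general, since it rules out \emph{arbitrary} $\gamma\ge3$ components rather than only recursively built ones; moreover, your concession of a constant factor is unavoidable, since by the probabilistic result cited as \cite{bill} a single component with $\gamma=g$ can have $\binom{r}{g}(1-o(1))$ minimum dominating sets and thus beat the replacement's leading constant, so the lemma is actually false at the level of constants if arbitrary components are allowed. What the paper's explicit case computations buy instead is control of the leading constants among its toolbox constructions, which is what feeds into Lemma~\ref{lem:11}, the balanced component sizes in the theorem that follows it, and the efficiency constant $(\sqrt{2}/e)^x$ of Section~4; your order-of-magnitude argument would not by itself support the ``maximum number given the component method'' phrasing used there, though it fully suffices for the lemma as it is used to establish the $\Omega(n^x)$ lower bounds.
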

	\begin{proof}  We begin by considering the case $\gamma(G)=3$.  Throughout we assume that all relevant  quantities are integers.  To construct $G$ we split the vertices  into components of order $An$ and $(1-A)n$
	on which we place the graphs $K_{An}$ and $\trpart[(1-A)n]$ respectively.  This yields $An\cdot{{(1-A)n}\choose{2}}$ dominating sets of size 3, a quantity that is maximized when $A=1/3$, and which yields	$\frac{2}{27}n^3(1+o(1))$ 3-dominating sets, constituting, asymptotically, a fraction 4/9 of the ${n\choose 3}$ possible triples.  { This is our baseline construction for $\gamma=3$ and gives the largest number of  $\gamma(G)$ sets given the component method -- and using two components.}    

When $\gamma(G)=4$, we have a choice between two components each with domination number 2, or, by the $\gamma(G)=3$ case, three components with domination numbers 1, 1, and 2.  Isoperimetric considerations (or Calculus!) suggest that the component sizes be $n/2, n/2$ and $n/4, n/4, n/2$ in these two cases.  The better solution is the first since it yields ${{n/2}\choose{2}}^2\sim n^4/64$ 4-dominating sets, as opposed to the $\frac{n}{4}\frac{n}{4}{{n/2}\choose{2}}\approx \frac{n^4}{128}$ sets given by the second scenario.

Continuing in this fashion, we thus see that all partitions are into components with domination number 1 or 2.
	\end{proof}
	
\begin{lem}\label{lem:11}
	The optimum partition of $x$ will be the partition with at most one component with domination number $1$. 
\end{lem}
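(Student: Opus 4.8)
The plan is to reduce the lemma to a single elementary inequality about how the minimum-dominating-set count of a disjoint union behaves under merging small components. First I would record the key product formula. Since $G=\bigcup_{i=1}^k H_i$ is a disjoint union with $\gamma(G)=\sum_i\gamma(H_i)=x$, any dominating set $D$ of size $x$ must restrict to a dominating set of each $H_i$, and because no $H_i$ can be dominated by fewer than $\gamma(H_i)$ vertices while the total budget $x$ is exactly $\sum_i\gamma(H_i)$, each restriction $D\cap V(H_i)$ is forced to be a \emph{minimum} dominating set of $H_i$. Hence the number of size-$x$ dominating sets of $G$ equals the product $\prod_i d(H_i)$, where $d(H_i)$ is the number of minimum dominating sets of $H_i$. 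By Lemma~\ref{lem:12} we may take every $H_i$ to have $\gamma(H_i)\in\{1,2\}$; a clique $K_r$ has $d(K_r)=r$, and the $\gamma=2$ graph of Section~2 on $r$ vertices has $d=\binom{r}{2}$ (less one in the odd case).

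Write $a$ for the number of components with $\gamma=1$ and $b$ for the number with $\gamma=2$, so that $a+2b=x$. For a fixed type $(a,b)$ I would optimise the component orders $r_i$ subject to $\sum_i r_i=n$. A Lagrange-multiplier calculation (using $\binom{r}{2}\sim r^2/2$) assigns order $n/x$ to each $\gamma=1$ component and $2n/x$ to each $\gamma=2$ component, and the product then evaluates to $2^b(n/x)^x(1+o(1))$. This closed form already reveals the mechanism: the factor $(n/x)^x$ is independent of the type, so the count is strictly increasing in $b$, and is therefore maximised by taking $b=\lfloor x/2\rfloor$, forcing $a=x-2b\in\{0,1\}$ --- at most one component with domination number $1$, as claimed.

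To turn this into a clean proof I would avoid re-deriving the optimum for each type and argue instead by a local swap. Suppose the globally optimal partition had $a\ge 2$ components of domination number $1$. At the optimum all components have balanced orders of size about $n/x$, so I may select two $\gamma=1$ components of orders $s,t\ge 2$ and replace them by a single $\gamma=2$ component of order $s+t$. This leaves both $n$ and $x=a+2b$ unchanged and multiplies the total count by $\binom{s+t}{2}/(st)$. Since $\binom{s+t}{2}-st=\tfrac12\bigl(s(s-1)+t(t-1)\bigr)>0$ whenever $\max(s,t)\ge 2$, the swap strictly increases the number of minimum dominating sets, contradicting global optimality. Hence the optimum must have $a\le 1$.

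The hard part is not the inequality $\binom{s+t}{2}>st$, which is immediate, but the supporting bookkeeping: verifying that $\gamma$ of the disjoint union really is additive so that the product formula holds, and confirming that at a genuine optimum the $\gamma=1$ components are large rather than isolated vertices (the lone degenerate case $s=t=1$ gives $\binom{2}{2}=1\cdot1$ and only equality). Both points are why the statement is naturally asymptotic and why $n\gg x$ is assumed throughout; with that in hand the swap argument closes the lemma.
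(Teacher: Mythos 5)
Your proof is correct and takes essentially the same approach as the paper: both arguments perform the local swap replacing two $\gamma=1$ components of orders $s$ and $t$ by a single $\gamma=2$ component of order $s+t$, and conclude from $\binom{s+t}{2}\ge st$ that the optimum has at most one $\gamma=1$ part. Your write-up is in fact slightly more careful than the paper's --- you justify the product formula for counting minimum dominating sets across components, prove the inequality directly via $\binom{s+t}{2}-st=\tfrac12\bigl(s(s-1)+t(t-1)\bigr)$ rather than only comparing at $s=t$, and flag the degenerate equality case $s=t=1$ --- but the core argument is identical.
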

	\begin{proof}
		Suppose that the optimum partition has more than one $1$. Thus our partition has at least two $1$s. These two parts will be $K_r$ and $K_{r'}$ for some $r$ and $r'$. The number of ways to select dominating sets for these two components will be $r\cdot r'$. If one uses a partition that replaces the two $1$s with a single $2$. Then the number of ways to select dominating sets for these components will be ${r + r' \choose 2}$. Note that the maximum of $r \cdot r'$ occurs when $r = r'$. However, ${r + r' \choose 2}$ remains constant no matter the values of $r$ and $r'$. Further when $r = r'$, ${2 r \choose 2} \geq r^2$. Thus replacing any pair of $1$s in our partition with a $2$ will increase the number of dominating sets. It follows that our optimum partition will have no pairs of $1$s, and hence it will only have at most one $1$.
	\end{proof}

\begin{thm}
	Given the component method, the graph $G_{n,x}$ with the maximum number of dominating sets will have a partition into components with domination number $2$ with at most one component that is a complete graph. Each component with $\gamma=2$ receives at least\footnote{when $\frac{2}{x}n$ is not an integer replace $n$ by $n'$ where $n'$ is the largest integer less than $n$ such that $\frac{2}{x}n'$ is an integer and replace $n$ by $n'$. For the remaining $n-n'$ vertices distribute these as evenly as possible to the $2$s until they have been exhausted.}  $\frac{2}{x}n$ vertices and the single possible $K_r$ is on $r=\frac{1}{x}n$ vertices. 
\end{thm}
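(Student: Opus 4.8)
The plan is to combine the two preceding lemmas with a parity argument to fix the component structure, and then to treat the allocation of vertices as a single constrained optimization problem. By Lemma~\ref{lem:12} I may assume every component has domination number $1$ or $2$, and by Lemma~\ref{lem:11} at most one component has domination number $1$, i.e. is a complete graph $K_{r_0}$. Since $\sum_i \gamma(H_i)=x$ while each $\gamma=2$ component contributes $2$ and the lone possible $K_{r_0}$ contributes $1$, the number of $\gamma=1$ components is congruent to $x$ modulo $2$; together with the bound of at most one, this forces exactly zero such components when $x$ is even and exactly one when $x$ is odd. Thus the asserted structure is determined, and only the component \emph{sizes} remain free.

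Next I would write the objective explicitly. Because $G_{n,x}$ is a disjoint union, a dominating set of size $x$ must restrict to a set of size $\gamma(H_i)$ on each $H_i$ (the sizes sum to $x=\sum_i\gamma(H_i)$, so none can exceed its share), hence to a \emph{minimum} dominating set on each component; conversely any such choice assembles into a size-$x$ dominating set of $G_{n,x}$. The number of minimum dominating sets is therefore the product of the per-component counts. A $\gamma=2$ component of order $r$ contributes $\binom{r}{2}$ (or $\binom{r}{2}-1$ when $r$ is odd) by the Proposition of Section~2, while a $K_{r_0}$ contributes $r_0$. To leading order, where $\binom{r}{2}\sim r^2/2$, the quantity to maximize is proportional to $r_0^{\,\varepsilon}\prod_{i=1}^{b} r_i^{2}$ subject to $r_0+\sum_{i=1}^b r_i=n$, where $b$ is the number of $\gamma=2$ components and $\varepsilon\in\{0,1\}$ records the parity of $x$.

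The core of the argument is then a weighted AM--GM (equivalently, a Lagrange multiplier) computation. Assigning weight $2$ to each $\gamma=2$ component and weight $1$ to the $K_{r_0}$, the maximizer of $\prod_j r_j^{w_j}$ under $\sum_j r_j=n$ occurs at $r_j = w_j\,n/\big(\sum_j w_j\big)$. The decisive observation is that the weight of each component is exactly its domination number, so $\sum_j w_j=\sum_i\gamma(H_i)=x$. Hence every $\gamma=2$ component receives $r_i=\tfrac{2}{x}n$ vertices and the single $K_{r_0}$, when present, receives $r_0=\tfrac{1}{x}n$ vertices, exactly as claimed. The remaining work is bookkeeping: rounding the ideal sizes to integers by the prescription of the footnote and spreading the $n-n'$ leftover vertices across the $\gamma=2$ parts, which is precisely why each such part receives \emph{at least} $\tfrac{2}{x}n$ vertices.

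The main obstacle I anticipate is not the optimization itself but the reconciliation of the continuous optimum with the integer constraints and the $-1$ corrections for odd-order $\gamma=2$ components: one must verify that these lower-order perturbations cannot shift the optimal allocation beyond what the rounding already absorbs, so that the leading-order optimizer $r_i=\tfrac{2}{x}n$, $r_0=\tfrac{1}{x}n$ genuinely describes the extremal construction produced by the component method.
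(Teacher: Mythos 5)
Your proposal is correct, and it reaches the theorem by a noticeably different route through the optimization step, even though the skeleton (invoke Lemmas \ref{lem:12} and \ref{lem:11}, then optimize the component sizes) is the same. The paper proceeds in two stages: first a local exchange argument showing any two $\gamma=2$ components must have equal order, by comparing $\binom{r+a}{2}\binom{r-a}{2}\approx\frac14(r+a)^2(r-a)^2$ against $\binom{r}{2}^2\approx\frac14 r^4$; then, for odd $x=2p+1$, a single-variable calculus maximization of $(n-pr)\binom{r}{2}^p$ to locate $r=\frac{2}{x}n$ and hence the complete part of size $\frac{1}{x}n$. You instead fix the component structure by an explicit parity count (the number of $\gamma=1$ parts is congruent to $x$ modulo $2$, so Lemma \ref{lem:11} forces exactly zero or one) and then solve the whole allocation in one stroke by weighted AM--GM/Lagrange multipliers: maximizing $\prod_j r_j^{w_j}$ with $\sum_j r_j=n$ gives $r_j=w_j n/\sum_k w_k$, and since the weights are exactly the domination numbers, $\sum_k w_k=x$ and both claimed sizes $\frac{2}{x}n$ and $\frac{1}{x}n$ drop out simultaneously. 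Your version buys uniformity (no case split between even and odd $x$ in the optimization itself, and no separate equalization lemma) and it makes explicit two points the paper leaves implicit: that a size-$x$ dominating set of the disjoint union must restrict to a minimum dominating set on each component, so the count genuinely factors as a product, and the parity argument pinning down the number of complete components. It also silently repairs a typo in the paper's proof, where the optimum is written $r=\frac{2}{2p+1}$ rather than $r=\frac{2n}{2p+1}$. What the paper's two-stage argument buys in exchange is elementarity -- the pairwise swap needs no multivariable machinery -- though its stated inequality condition ($\frac{a}{\sqrt{2}}<r$) is oddly framed, since $(r^2-a^2)^2<r^4$ holds for any $a\neq 0$. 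Both treatments handle the integrality and the $\binom{r}{2}-1$ odd-order corrections only at leading order, you via the explicitly flagged rounding step matching the theorem's footnote, the paper via its $\approx$ signs, so neither is more rigorous than the other on that front.
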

	\begin{proof}
		By Lemmas \ref{lem:12} and \ref{lem:11} the partition of $x$ needed will consist of all $2$s with at most one $1$. We first show that all of the $2$s have the same number of vertices. To do this suppose that there are two $2$ partitions of total size $2r$ and difference $2a; r > a \geq 1$.  The number of ways to dominate these two components is ${r+a \choose 2}{r-a \choose 2} \approx \frac{1}{4}(r+a)^2(r-a)^2$. If we consider these two partitions of equal size then we have ${ r \choose 2}^2 \approx \frac{1}{4}r^4$. Note that $(r+a)^2(r-a)^2 < r^4$ whenever $\frac{a}{\sqrt{2}} < r$, which is always true since $r \geq a$. Thus no components with domination number $2$ in the partition of $x$ may receive a different number of vertices (assuming $n$ can be divided into equal parts for each of the components). 
		
		If $x$ is even then we must have $\frac{2}{x}n$ vertices in each component of $G_{n,x}$. Thus let $x$ be odd, say $x=2p+1$. We know that there must be exactly $p$ $\gamma=2$ components and one $\gamma=1$ component. Let $r$ represent the number of vertices in each $2$ component. Thus there are $n-p \cdot r$ vertices in the $\gamma=1$ component. There are $(n-p \cdot r){r \choose 2}^p \approx (n-p \cdot r)\frac{r^{2p}}{2^p}$ ways to dominate $G_{n,x}$. The maximum of this function with respect to $r$ is when $r= \frac{2}{2p+1}=\frac{2}{x}$. Thus the $\gamma=1$ part has size $n-p \cdot \frac{2}{2p+1} = \frac{1}{x}n$. \hfill\end{proof}
		

\section{Efficiency of the Construction}  The number of subsets of size $x$ is ${n\choose x}\sim\frac{n^x}{x!}$.  By contrast, our construction yields $(\frac{\sqrt{2}}{x})^xn^x$ dominating sets when $x$ is even and $\frac{2^{(x-1)/2}}{x^x}n^x$ when $x$ is odd.  Since $x!\sim\sqrt{2\pi x}(x/e)^x$, we see that asymptotically a fraction $(\sqrt{2}/e)^x$ of the $x$-sets dominate.  It would be interesting to see how close to ${n\choose x}$ we can make the number of dominating sets by constructive means.

\section{Acknowledgments} The research of AG was sponsored by NSF Grant 1004624.


\begin{thebibliography}{99}

\bibitem{bill} S.~Connolly, Z.~Gabor, A.~Godbole, and B.~Kay (2013).  ``Bounds on the Maximum Number of Minimum Dominating Sets," Submitted.

\bibitem{goddard} P. Dankelmann, G. Domke, W. Goddard, P. Grobler, J. Hattingh, H. Swart (2004).  ``Maximum sizes of graphs with given
domination parameters," {\it Discrete Mathematics} {\bf  281}, 137--148

 
\bibitem{sanchis} L.~Sanchis (1991).  ``Maximum number of edges in connected graphs with a given domination number," {\it Discrete Mathematics} {\bf 87}, 65--72.

\bibitem{vizing} V. Vizing (1965).   ``A bound on the external stability number of a graph," 
{\it Dokl. Akad. Nauk SSSR} {\bf 164}, 729--731.


\end{thebibliography}
\end{document}